\newtheorem{theorem}{Theorem}[section]
\newtheorem{lemma}[theorem]{Lemma}
\newtheorem{fact}[theorem]{Fact}
\newtheorem{question}[theorem]{Question}
\newtheorem*{acknowledgement*}{Acknowledgement}
\newtheorem*{theorem*}{Theorem}
\theoremstyle{definition}
\newtheorem{definition}[theorem]{Definition}
\theoremstyle{remark}
\newtheorem{remark}[theorem]{Remark}
\numberwithin{equation}{section}
\newcommand{\eps}{\varepsilon}
\newcommand{\R}{\mathbb R}
\newcommand{\B}{\mathcal B}
\newcommand{\C}{\mathcal C}
\def\XXint#1#2#3{{\setbox0=\hbox{$#1{#2#3}{\int}$}
		\vcenter{\hbox{$#2#3$}}\kern-.5\wd0}}
\DeclareMathOperator{\dist}{dist}
\DeclareMathOperator{\diam}{diam}
\DeclareMathOperator{\sign}{sign}
\begin{document}
	
	\title{Zygmund graphs are thin for doubling measures}
	\author[rvt]{Claudio A. DiMarco}
	\address[rvt]{1000 E. Henrietta Rd., Mathematics Department, Monroe Community College, Rochester, NY 14623, USA}
	\ead{cdimarco2@monroecc.edu}
	\begin{keyword}
		Zygmund class, doubling measure, Lipschitz class, H\"older class. \\
		\MSC[2020]{Primary 28A35, 26B35; Secondary 28A75}
	\end{keyword}
	\date{\today}

\begin{abstract}
The Zygmund functions form an intermediate class between Lipschitz and H\"older functions; their second order divided differences are uniformly bounded.  It is well known that for $d \geq 1$ the graph of any Lipschitz function $f:\R^d \rightarrow \R$ is thin for doubling measures, and we extend this result to the Zygmund class.
\end{abstract}

\maketitle

\section{Introduction}
Certain subsets $E$ of a metric space $X$ are ``thin" or ``fat" for doubling measures.  Wu initiated the discussion of thinness for doubling measures on $\R$ in \cite{Wu}, wherein the relationship between thinness and ``porosity" was also investigated. The results in \cite{Wu} employ a generalized notion of porosity which easily implies thinness for doubling measures, as Chen and Wen noted in \cite[page 1]{Chen}.

In \cite{Ojala2} the authors considered the case when $X$ is uniformly perfect and $E \subset X$ has the property $\mu(E) = 0$ for every doubling measure $\mu$ on $X$, or the property $\mu(E) > 0$ for all doubling measures. Such subsets are called thin and fat, respectively.  Ojala et al provided sufficient conditions for certain ``cut-out sets" being thin or fat, and proved that $E \subset X$ is thin if and only if $E$ is quasisymmetrically null, i.e. $\mathcal{H}^q(f(E)) = 0$ whenever $f:X \rightarrow Y$ is quasisymmetric and $Y$ is Ahlfors $q$-regular, where $\mathcal{H}^q$ is $q$-dimensional Hausdorff measure \cite{Ojala2}.  

It is interesting to consider the special case when $E \subset \R^{d+1}$ is the graph of a continuous function.  This is an area of recent interest in which little is understood.  For a while, an open question in this area was whether every rectifiable curve in the plane is thin for doubling measures. Surprisingly, this turned out not to be the case (see \cite{Garnett}).  However, if attention is restricted to so called ``isotropic" doubling measures on $\R^{d+1}$, then indeed the graph of any continuous function $f:[0,1]^d \rightarrow [0,1]$ has measure zero \cite{Chen}.  This is easily extended to include all continuous $f: \R^d \rightarrow \R$.

Recall that a bounded function $f: \R^d \rightarrow \R$ is H\"older continuous if there are $0<\alpha \leq 1$ and $C>0$ such that $|f(x) - f(y)| \leq C|x-y|^{\alpha}$ for all $x,y \in \R^d,$ and is called Lipschitz if there is $L>0$ such that $|f(x) - f(y)| \leq L|x-y|$ for all $x,y \in \R^d$.  In \cite{Ojala} the authors showed that even a graph, $y=f(x)$, of a continuous function, need not be thin in general.  They remark that there are few known results in the positive direction, i.e. sufficient conditions for thinness. In particular, it is known that the graph of a Lipschitz function is thin, but whether this property extends to H\"older functions remains unknown \cite[Question 1.2]{Ojala}.

In 1876 Weierstrass introduced the now familiar family of continuous but nowhere differentiable functions
\[ 
f(x) = \sum_{n = 0}^{\infty} a^n \cos(b^n \pi x), ~~x\in \R 
\]
where $0 < a < 1$ and $b$ is an odd integer with $ab > 1 + 3\pi/2$.  Over time several improvements were made on the lower bound $ab > 1 + 3\pi/2$, and in 1916 Hardy proved that any function of the form 
\[ \sum_{n = 0}^{\infty} a^n \cos(b^n \pi x) ~~\text{or}~~ \sum_{n = 0}^{\infty} a^n \sin(b^n \pi x) \]
is nowhere differentiable whenever $ab \geq 1$; see \cite{Hardy}.  The critical case $ab = 1$ produces 
\[ f(x) = \sum_{n = 0}^{\infty} b^{-n} \cos(b^n \pi x), ~~x\in \R, \]
which can be characterized by the behavior of its second divided differences:
\begin{definition}\label{zyg_class}
	The \textbf{Zygmund class} $\Lambda_*(\mathbb{R}^d)$ is the space of bounded continuous functions $f: \mathbb{R}^d \rightarrow \mathbb{R}$ for which 
	\begin{equation}\label{sup_2nd_diff}
	\|f\|_* = \sup \left\{ \frac{|f(x + h) + f(x - h) - 2f(x)|}{|h|} : x,h \in \mathbb{R}^d \right\} < \infty.
	\end{equation}
\end{definition}
These spaces were introduced by Zygmund when he noticed that the conjugate function of a Lipschitz function in the unit circle does not need to be Lipschitz, but it is in the Zygmund class \cite{Zygmund, Donaire}.

For any $0<\alpha \leq 1$, denote the H\"older class by $\Lambda_{\alpha}(\R^d)$ and notice that the Lipschitz class is obtained for $\alpha = 1$.  It is well known that if $0 < \alpha < 1$ then $\Lambda_1(\R^d) \subset \Lambda_*(\R^d) \subset \Lambda_{\alpha}(\R^d)$, and because Lipschitz graphs $E \subset \R^{d+1}$ are thin for doubling measures, it is natural to extend this result to the Zygmund class (which is the main result):
\newtheorem*{main_result}{Theorem \ref{main_result}}
\begin{main_result}
	If $f: \R^d \rightarrow \mathbb{R}$ is a Zygmund function, then the graph of $f$ is thin for doubling measures.  That is, if $\mu$ is a doubling measure $\mu$ on $\mathbb{R}^{d+1}$ and $E$ is the graph of $f$, then $\mu(E) = 0.$
\end{main_result}

There is a noticeable relationship between Definition \ref{zyg_class} and that of differentiability, and recent progress has been made in relating the two notions.  In \cite{Donaire} the authors showed that for any $f\in\Lambda_*(\R^d)$, the set of points at which the first divided differences are bounded has Hausdorff dimension at least one. They also proved that any function in the small Zygmund class (see \cite{Donaire} for the definition) is differentiable on a set of Hausdorff dimension at least one.  This result contrasts with the large Zygmund class because a function $f\in \Lambda_*(\R^d)$ may be nowhere differentiable.

\section{Preliminaries} 
For a metric space $(X,d),$ write $A_{\delta}$ for the open $\delta$-neighborhood of $A\subset X.$  For $x = (x_1, x_2, \dots, x_n) \in \mathbb{R}^n$ the quantity $|x| = \|x\|$ is the norm of $x$ and $x[i]$ is the $i$th coordinate of $x$.  For $1 \leq i \leq n$ we write $e_i \in \R^n$ for the standard basis vector defined by $e_i[i]=1$ and $e_i[j]=0$ whenever $j\neq i$.  As usual $\lfloor p \rfloor$ and $\lceil p \rceil$ are the floor and ceiling of $p \in \R$ respectively.

All cubes $Q\subset\R^d$ are assumed to be closed unless otherwise noted, and we use $B(x,r)$ and $\bar{B}(x,r)$ for open and closed balls of radius $r$ respectively.  Also $B_{d+1}(x,r)$ is used to denote open balls in $\R^{d+1}$ to avoid confusion in the context of a function $f: \R^d \rightarrow \R.$  For an open ball $B = B(x,r)$, for any $s > 0$ we write $sB = B(x, sr).$  We will also make use of the the function
\[ \sign(x) = \begin{cases}
	1, & x \geq 0 \\
	-1, & x < 0. \\
\end{cases}\]

A metric space is called doubling if there is $C_1 \geq 1$ so that every set of diameter $d$ in the space can be covered by $C_1$ sets of diameter at most $d/2$ \cite[page 81]{Heinonen}.  Every complete doubling metric space supports a doubling measure \cite{Luukkainen}.  The following definition can be found in \cite[page 3]{Heinonen}:
\begin{definition}\label{doubling_def}
	A Borel measure $\mu$ on a metric space $(X,d)$ is called \textbf{doubling} if there is $C>0$ such that $\mu(\bar{B}(x, r)) \leq C \mu(\bar{B}(x, r/2))$ for all nonempty closed balls $\bar{B}(x,r)$.
\end{definition}
A doubling metric space that supports a doubling measure $\mu$ is often denoted $(X, d, \mu)$.  Given such a space, the doubling property  of $\mu$ ensures that the measure is not disproportionately concentrated around any particular point $x\in X$.
\begin{definition}
	A subset $E$ of a metric space $(X,d)$ is \textbf{thin} if $\mu(E)=0$ for every doubling measure $\mu$ on $X$.
\end{definition}

In \cite{Jonsson} the authors defined so-called Lipschitz spaces $L_{\alpha}(F)$ on closed subsets $F \subset \R^d$ for all $\alpha > 0$.  An integral part of this definition involves polynomial approximations $P$ to $f$, where the degree of $P$ is at most $\lfloor \alpha \rfloor$.  The case $\alpha = 1$ corresponds to the Zygmund class (i.e. $L_1(F) = \Lambda_*(F)$), in which case $\lfloor \alpha \rfloor = 1$ furnishes degree one polynomial (i.e. affine) approximations to $f$ \cite[chapter III, section 2.1]{Jonsson}.  These polynomial approximations are valid on cubes in $F = \R^d$, and therefore on balls via restriction.  We use the following simplified formulation because it is easily applied to the problem at hand.
\begin{fact}\label{plane_approx}
	For any Zygmund function $f \in \Lambda_*(\R^d)$ and any ball $B = B(x_0, r) \subset \R^d$, there is a degree-1 polynomial $P$ such that 
	\begin{equation}\label{M_def}
		\sup_{x \in B} |f(x)-P(x)| \le Mr,
	\end{equation}
	with a constant $M$ independent of both $x_0$ and $r$.  Moreover, this property characterizes Zygmund functions (\cite[Proposition 3, page 54]{Jonsson-Wallin}, \cite[page 155]{Jonsson}).
\end{fact}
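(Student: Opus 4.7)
The plan is to prove the forward direction (Zygmund implies the affine approximation bound) via mollification plus a first-order Taylor expansion, and then dispatch the converse (which gives the ``moreover'' claim characterizing $\Lambda_*(\R^d)$) by a short algebraic manipulation.

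\textbf{Forward direction.} Fix a compactly supported, radially symmetric bump $\phi : \R^d \to [0,\infty)$ with $\int \phi = 1$, write $\phi_r(y) = r^{-d}\phi(y/r)$, and set $f_r = f \ast \phi_r$. Since $\phi$ is even, $\partial^\alpha \phi$ is even whenever $|\alpha|=2$, and $\int \partial^\alpha \phi_r\,dy = \int y_k \partial^\alpha \phi_r(y)\,dy = 0$ for $k=1,\dots,d$. Symmetrizing $y\mapsto -y$ and subtracting these vanishing moments gives, for any $|\alpha|=2$,
\[
\partial^\alpha f_r(x) \;=\; \tfrac{1}{2}\int \bigl[f(x-y)+f(x+y)-2f(x)\bigr]\,\partial^\alpha \phi_r(y)\,dy.
\]
The Zygmund bound $|f(x+y)+f(x-y)-2f(x)|\le \|f\|_*|y|$ and the scaling identity $\int |y|\,|\partial^\alpha \phi_r(y)|\,dy = C_\phi/r$ together force $\|\nabla^2 f_r\|_\infty \le C_1\|f\|_*/r$. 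A parallel pairing applied directly to $f-f_r = \tfrac{1}{2}\int [f(x-y)+f(x+y)-2f(x)]\phi_r(y)\,dy$ yields $\|f - f_r\|_\infty \le C_2\|f\|_*\, r$.

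I then take $P(x) := f_r(x_0) + \nabla f_r(x_0)\cdot(x - x_0)$, the first-order Taylor polynomial of the smooth function $f_r$ at $x_0$. Taylor's theorem together with the Hessian bound gives $|f_r(x) - P(x)|\le \tfrac{1}{2}C_1\|f\|_*\,r$ for $x\in B(x_0,r)$. Combining with the mollification error produces
\[
\sup_{x\in B}|f(x)-P(x)| \;\le\; \bigl(\tfrac{1}{2}C_1 + C_2\bigr)\|f\|_*\,r \;=:\; Mr,
\]
with $M$ depending only on $d$, $\phi$, and $\|f\|_*$, hence independent of $x_0$ and $r$.

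\textbf{Converse.} Assume the approximation property holds with constant $M$. For $x\in\R^d$ and $h\ne 0$, apply the hypothesis on $B(x, 2|h|)$ to obtain an affine $P$ with $\sup_{B}|f-P|\le 2M|h|$. Since $P$ is affine, $P(x+h)+P(x-h)-2P(x)=0$, so the triangle inequality gives $|f(x+h)+f(x-h)-2f(x)|\le 8M|h|$, proving $f\in\Lambda_*(\R^d)$ with $\|f\|_*\le 8M$.

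\textbf{Main obstacle.} The only delicate step is the Hessian estimate $\|\nabla^2 f_r\|_\infty\lesssim \|f\|_*/r$: a naive approach using only first differences produces at best $\|\nabla f_r\|_\infty\lesssim \|f\|_*\log(1/r)$, which degrades the approximation rate to $r\log(1/r)$. The key is that evenness of $\partial^\alpha\phi$ for $|\alpha|=2$ lets the symmetrization land precisely on the second symmetric difference controlled by $\|f\|_*$, converting the Zygmund hypothesis into the correct $1/r$ Hessian bound with no logarithmic loss and hence into the sharp $O(r)$ affine approximation.
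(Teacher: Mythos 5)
Your proof is correct. Note, though, that the paper does not prove this Fact at all: it is stated as a known result, imported from the trace/extension literature (Jonsson--Wallin, Proposition 3, p.~54, and Jonsson--Sj\"ogren--Wallin, p.~155), where it appears as the identification $L_1(\R^d)=\Lambda_*(\R^d)$ via local polynomial approximation. So what you have supplied is a self-contained substitute for that citation, and it is the standard one: mollify at scale $r$, use evenness of $\phi$ to rewrite $\nabla^2 f_r$ and $f_r-f$ in terms of the second symmetric difference, get $\|\nabla^2 f_r\|_\infty\lesssim\|f\|_*/r$ and $\|f-f_r\|_\infty\lesssim\|f\|_*r$, and take the first-order Taylor polynomial of $f_r$ at $x_0$; the converse is the easy triangle-inequality computation you give, since affine functions have vanishing second differences. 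Your identification of the obstacle is also the right one --- first-difference arguments only give $\|\nabla f_r\|_\infty\lesssim\|f\|_*\log(1/r)$ and would lose a logarithm. Two cosmetic points: the vanishing of the first moments $\int y_k\,\partial^\alpha\phi_r\,dy$ is not actually needed for your identity (only $\int\partial^\alpha\phi_r\,dy=0$ is used), and your displayed formula computes $f_r-f$ rather than $f-f_r$, a sign slip that does not affect the estimate; also, in the converse you should say explicitly that boundedness and continuity of $f$ are assumed, since they are part of Definition~\ref{zyg_class} and do not follow from the approximation property alone. With those trivial adjustments, your argument proves exactly what the paper cites, and with the same quantitative dependence ($M\simeq\|f\|_*$ up to constants depending on $d$ and $\phi$, independent of $x_0$ and $r$) that Lemma~\ref{P_balls} and Theorem~\ref{main_result} require.
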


Using the characterization of Zygmund functions in Fact \ref{plane_approx}, Lemma \ref{P_balls} shows that if the gradient of $P$ is not too large, then above each point $z\in E \subset \R^{d+1}$ there is a ball of suitable size which lies above $E$. If $\nabla P$ is large, then there is a ball in the direction of $\nabla P$ where $E$ is above a ball in $\R^{d+1}$ of suitable size.

\begin{lemma}\label{P_balls}
	Suppose $f \in \Lambda_*(\R^d)$, let $E \subset \R^{d+1}$ denote the graph of $f$, and fix $z_0  = (x_0, f(x_0)) \in E$.  For any $\delta > 0$ there is an open ball $B = B(z_0, r)\subset E_{\delta}$ and a corresponding ball $B(z, r) \subset (E_{\delta} \setminus E)$.  Moreover $r>0$ may be chosen arbitrarily small, and there is $M>1$ such that one of the following conditions holds for each $B$:
	\begin{enumerate}[(i)]
		\item  $z[i] = {z_0}[i]$ for $i < d+1$ and $|z - z_0| = (2M+1)r$ ~~\textbf{or} 
		\item  $|z - z_0| \leq (5M+1)r$ where the constant $K$ depends on $M$ and is independent of $z$ and $r$.
	\end{enumerate}
\end{lemma}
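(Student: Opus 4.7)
The plan is to construct the companion ball $B(z, r)$ from the affine approximation to $f$ given by Fact~\ref{plane_approx}, splitting into two cases based on the size of the gradient of the approximating polynomial. Fix $z_0 = (x_0, f(x_0)) \in E$ and $\delta > 0$, let $M_0$ denote the constant from Fact~\ref{plane_approx}, and choose $r > 0$ small enough that $(5M+2)r < \delta$; this ensures $B(z_0, r) \subset E_\delta$ and that $B(z, r) \subset E_\delta$ whenever $|z - z_0| \le (5M+1)r$. The constant $M > 1$ will be taken as a sufficiently large multiple of $M_0$ (e.g.\ $M = 3M_0 + 1$) so that the case analysis below balances.

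Apply Fact~\ref{plane_approx} to a ball $B(x_0, R) \subset \R^d$ with $R$ a bounded multiple of $r$, obtaining $P(x) = a + v \cdot (x - x_0)$ with $|f - P| \le M_0 R$ on $B(x_0, R)$ and $|a - f(x_0)| \le M_0 R$. If $|v|$ is at most a fixed threshold proportional to $M_0$, take $R = r$ and place $z = z_0 + (2M+1)r\, e_{d+1}$; the $x$-projection of $B(z, r)$ is then $B(x_0, r)$, and the bound
\[
f(x) \le a + v \cdot (x - x_0) + M_0 r \le f(x_0) + 2 M_0 r + |v|\,r \le f(x_0) + 2Mr
\]
on $B(x_0, r)$ (valid once $M$ sufficiently exceeds $M_0$) forces $f(x) - f(x_0) - (2M+1)r \le -r$ on the $x$-projection, so $B(z, r) \cap E = \emptyset$, giving condition~(i). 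If instead $|v|$ exceeds the threshold, the hyperplane $y = P(x)$ is steep, and I would displace $z_0$ along the upward unit normal $\hat n = (-v, 1)/\sqrt{1+|v|^2}$, setting $z = z_0 + s\hat n$ with $s$ a bounded multiple of $r$. Applying Fact~\ref{plane_approx} at a scale $R \asymp s + r$ that comfortably contains the $x$-projection of $B(z, r)$, the graph of $f$ over $B(x_0, R)$ lies in a slab $\{|y - P(x)| \le M_0 R\}$ of perpendicular thickness $2 M_0 R/\sqrt{1+|v|^2}$; since $|v|$ is large this thickness is small, and a suitable $s \le (5M+1)r$ places $z$ at perpendicular distance at least $r$ from the slab, so $B(z, r) \cap E = \emptyset$, giving condition~(ii).

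\textbf{Main obstacle.} The delicate step is the large-slope case: the scale $R$ must be large enough to cover the $x$-projection of $B(z, r)$, yet the resulting slab must still be thin enough (in the normal direction) for the displacement $s$ to clear it. Largeness of $|v|$ is essential, since the factor $1/\sqrt{1+|v|^2}$ compresses the perpendicular slab thickness; combined with the $M$-dependent threshold separating the two cases, this yields the constants $(2M+1)$ and $(5M+1)$ in the statement. That $r$ may be taken arbitrarily small is automatic, since Fact~\ref{plane_approx} applies uniformly at every scale with the same $M_0$.
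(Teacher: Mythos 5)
Your case (i) (small gradient) is essentially the paper's: displace the companion ball vertically above $z_0$ by a distance of order $Mr$, using the degree-one approximation $P$ to bound $f(x) - f(x_0)$ on the $x$-projection.  In case (ii) (large gradient), however, you take a genuinely different route.  The paper's proof first translates $x_0$ horizontally by $2r'$ in the direction of $\nabla P_B$, where $r' = \min(r/2,\, Mr/|\nabla P_B|)$, arrives at a point $x_0'$, and then moves vertically by $(3M+1)r$ with the sign of $f(x_0) - f(x_0')$ deciding up or down; the resulting ball has the smaller radius $r'$.  You instead push $z_0$ directly along the unit normal $\hat n = (-v,1)/\sqrt{1+|v|^2}$ to the approximating hyperplane and use the observation that the slab $\{|y - P(x)| \le M_0 R\}$ which traps the graph has perpendicular thickness $2M_0 R/\sqrt{1+|v|^2}$, small precisely because $|v|$ is large, so a normal displacement of a bounded multiple of $r$ clears it.  This is more geometric and has the side benefit that the companion ball keeps radius $r$, matching the lemma's statement as written (the paper's case (ii) actually produces a ball of radius $r' \le r/2$, a small mismatch with the statement that the paper never flags).

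One detail to tidy: you apply Fact~\ref{plane_approx} at scale $R$ and use the size of the resulting gradient $v$ to split cases, but you then set $R = r$ in case (i) while needing $R \asymp s+r$ in case (ii).  Since $v$ depends on the scale at which the approximation is taken, you need either to fix a single scale $R$ (say $R = 6r$) for both cases and restrict to $B(x_0,r)$ in case (i), or to invoke the standard observation that affine approximations at comparable scales have gradients differing by $O(\sqrt{d}\,M_0)$, so a large $|v|$ at scale $r$ forces a large gradient at scale $R$ as well.  Either fix is routine, but as written the threshold on $|v|$ and the slab estimate refer to possibly different approximating planes.  Aside from this, the argument is sound and the constants work out as you indicate.
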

\begin{proof}
	Let $\delta > 0$, fix $z_0  = (x_0, f(x_0)) \in E$, and let $M>1$ as in Remark \ref{plane_approx}.  There are two cases:
	\begin{enumerate}[(i)]
		\item There is $\beta > 0$ such that for every open ball $B = B(x_0, \eps) \subset E_{\delta}$ with $\eps < \beta$, the corresponding affine function $P_B$ as in Remark \ref{plane_approx} has $|\nabla P_B| \leq 1$.  For any such $B$, writing $P_B(x) = \sum_{i = 1}^d a_i x_i$ with $x = (x_1, x_2, \dots, x_d)$ shows 
		\begin{equation}
		\begin{split}\label{diff_in_P}
			|P_B(x) - P_B(x_0)|^2 &= |P_B(x - x_0)|^2 \\
			&\leq \left( \sum_{i = 1}^d |a_i| \left|x_i - x_0[i] \right| \right)^2 \\
			&\leq \sum_{i = 1}^{d} a_i^2 \left|x_i - x_0[i] \right|^2 \\
			&\leq \eps^2 \sum_{i = 1}^{d} a_i^2 \\
			&= \eps^2 |\nabla P|^2. \\
		\end{split}
		\end{equation}
		Choose $r>0$ small enough that both $B_{d+1} = B(z_0, r)$ and $B_{d+1}' = B(z_0 + (2M+2)r e_{d+1}, r)$ lie in $E_{\delta}$.  (Notice that $M>1$ implies these two balls are disjoint.)  Because $|\nabla P_B| \leq 1$, observe that for all $x \in B = B(x_0, r)$, in light of equation \eqref{M_def},
		\[
		\begin{split}
			|f(x) - f(x_0)| &\leq |f(x) - P_B (x)| + |P_B(x) - f(x_0)| \\
			&\leq Mr + |f(x_0) - P_B(x_0)| + |P_B(x_0) - P_B(x)| \\
			&\leq Mr + Mr + |\nabla P| r \\
			&\leq (2M+1)r,
		\end{split}
		\]
		from which it follows that $B_{d+1}' \cap E = \varnothing$, i.e. $B_{d+1}$ lies entirely above $E$ because \[ \dist(z_0, B_{d+1}') = (2M+1)r.\]
		
		\item There is a sequence of open balls $B_n = B(x_0, r_n)$ such that $r_n \rightarrow 0$ and  $|\nabla P_{B_n}| >1$.
		
		Choose $r>0$ small enough that both the following conditions are satisfied: 
		\begin{enumerate}[(a)]
			\item $B = B(x_0, r)$ has $P_B(x) = \sum_{i=0}^d, a_i x[i]$ with $|\nabla P_B| > 1$, and 
			\item $(5M+2)r < \delta$ so that $B(z, (5M+2)r) \subset  E_{\delta}$ for every $z \in E$.
		\end{enumerate}
		Put
		\begin{equation}\label{r_prime_def}
		r' = \min \left( r/2, \frac{Mr}{|\nabla P_B|} \right).
		\end{equation}
		and let $x_0' = x_0 + 2r' u$ where $u = \frac{\nabla P_B}{|\nabla P_B|}$.  Observe that for all $x \in B(x_0', r')$, equations \eqref{M_def},\eqref{diff_in_P} and \eqref{r_prime_def} provide
		\begin{equation}
		\begin{split}\label{main_eqn_2}
			|f(x) - f(x_0')| &\leq |f(x) - P_B (x)| + |P_B(x) - P_B(x_0')| + |P_B(x_0') - f(x_0')| \\
			&\leq 3Mr.
		\end{split}
		\end{equation}
		Let $z_0' = (x_0', f(x_0')) \in \R^{d+1}$ and define 
		\[z_0'' = z_0 + (2r'u, \sign(f(x_0) - f(x_0'))(3M+1)r), \] so that $B_{d+1}' = B(z_0'', r')$ resides either entirely below or entirely above $E$.  Indeed,
		\begin{equation}
			\begin{split}\label{magic_ball}
				|z_0 - z_0''| &\leq 2r' + (3M+1)r \\
				&\leq 2Mr + (3M+1)r \\
				&= (5M+1)r.
			\end{split}
		\end{equation}
		It follows from condition (b) and equation \eqref{main_eqn_2} that $B_{d+1}' \subset (E_{\delta} \setminus E)$ as required.
	\end{enumerate}
\end{proof}

\begin{remark}\label{5M_remark}
	The constant $2M+1$ in the conclusion of Lemma \ref{P_balls} may safely be replaced by $5M+1$ in case the ball $B_{d+1}'$ lies directly above $z_0 \in E$.  To this end, it suffices to choose $r$ small enough that $(5M+2)r < \delta$.  This observation simplifies the proof of Theorem \ref{main_result} by combining two cases.
\end{remark}

\begin{theorem}\label{main_result}
	If $f: \R^d \rightarrow \mathbb{R}$ is a Zygmund function, then the graph of $f$ is thin for doubling measures.  That is, if $\mu$ is a doubling measure $\mu$ on $\mathbb{R}^{d+1}$ and $E$ is the graph of $f$, then $\mu(E) = 0.$
\end{theorem}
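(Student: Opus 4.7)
The plan is to combine Lemma \ref{P_balls} with the Lebesgue differentiation theorem for doubling measures. Since $f$ is continuous, $E \subset \R^{d+1}$ is closed and hence Borel. If $\mu$ is any doubling measure on $\R^{d+1}$, the standard Lebesgue density theorem for doubling measures on the doubling space $\R^{d+1}$ gives
\[
\lim_{r \to 0^+} \frac{\mu(E \cap B_{d+1}(z_0,r))}{\mu(B_{d+1}(z_0,r))} = 1 \qquad \text{for $\mu$-a.e.\ } z_0 \in E.
\]
So, to conclude $\mu(E) = 0$, it suffices to exhibit a constant $\eta < 1$, independent of $z_0 \in E$, with $\liminf_{r\to 0^+} \mu(E \cap B_{d+1}(z_0,r))/\mu(B_{d+1}(z_0,r)) \le \eta$ at every $z_0 \in E$; then no point of $E$ can be a density point, forcing $\mu(E) = 0$.

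To produce such a bound, fix $z_0 \in E$ and any small $\delta > 0$. By Lemma \ref{P_balls} together with Remark \ref{5M_remark}, for arbitrarily small $r > 0$ there is a point $z \in \R^{d+1}$ and an open ball $B' = B_{d+1}(z,r)$ with $B' \subset E_\delta \setminus E$ and $|z - z_0| \le (5M+1)r$. Set $R = (5M+2)r$; then $B' \subset B_{d+1}(z_0, R)$ while $B_{d+1}(z_0, R) \subset B_{d+1}(z, (10M+3)r)$. Iterating the doubling property $k := \lceil \log_2 (10M+3)\rceil$ times produces a constant $c = C^{-k} \in (0,1)$, depending only on $M$ and the doubling constant $C$ of $\mu$, such that
\[
\mu(B_{d+1}(z_0, R)) \le \mu(B_{d+1}(z, (10M+3)r)) \le C^{k} \mu(B') \,.
\]
Since $B' \cap E = \varnothing$ and $B' \subset B_{d+1}(z_0, R)$, this yields
\[
\mu(E \cap B_{d+1}(z_0, R)) \le \mu(B_{d+1}(z_0, R)) - \mu(B') \le (1-c)\,\mu(B_{d+1}(z_0, R)),
\]
and letting $r \to 0$ (so $R \to 0$) gives the required density bound with $\eta = 1-c$, independent of $z_0$.

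The substantive work sits in Lemma \ref{P_balls}, which already supplies, at every scale near every $z_0 \in E$, a nearby ball of comparable radius that avoids $E$. The remaining step is the routine conversion of such a ``hole'' into a uniform density deficit via the doubling inequality, followed by the density theorem. The only point one must verify is that the constant $c$ is uniform in $z_0$; this holds because $M$ is a property of $f$ alone by Fact \ref{plane_approx}, and the doubling constant of $\mu$ is global. I do not anticipate any genuine obstacle beyond bookkeeping the geometric constants $M$, $5M+1$, and $10M+3$.
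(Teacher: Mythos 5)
Your proof is correct, and it takes a genuinely different route from the paper. The paper bounds $\mu(E)$ globally: it covers $E$ by the balls from Lemma \ref{P_balls}, translates each one to an offset ball $\hat B \subset E_\delta \setminus E$, applies the Vitali $5r$-covering lemma to extract a disjoint subfamily $\hat\C$, sums and compares $\mu\bigl((5K)\hat B\bigr)$ with $C^p\mu(\hat B)$ via doubling, and finally makes $\mu(E_\delta\setminus E)$ small by shrinking $\delta$. You instead argue pointwise: every $z_0 \in E$ has arbitrarily small scales at which $B_{d+1}(z_0,R)$ contains a ``hole'' $B'$ of comparable radius disjoint from $E$, so the density $\mu(E\cap B_{d+1}(z_0,R))/\mu(B_{d+1}(z_0,R))$ has $\liminf$ at most $1-C^{-k}<1$; the Lebesgue density theorem for doubling measures then forces $\mu(E)=0$. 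Both proofs lean entirely on Lemma \ref{P_balls} (and Remark \ref{5M_remark}) for the geometry, and your bookkeeping ($B'\subset B_{d+1}(z_0,(5M+2)r)\subset B_{d+1}(z,(10M+3)r)$, $k=\lceil\log_2(10M+3)\rceil$) is sound.

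The trade-offs: the paper's Vitali argument is self-contained and does not invoke differentiation theory, but it needs $\mu(E_\delta\setminus E)\to 0$ as $\delta\to 0$ by continuity from above, which requires $\mu(E_{\delta_0}\setminus E)<\infty$ for some $\delta_0$ --- not automatic for a doubling measure on the unbounded set $E_\delta\setminus E$ and really only clean after localizing. Your density-deficit argument avoids that subtlety entirely: you never need the measure of $E_\delta\setminus E$, only that the hole $B'$ misses $E$. On the other hand, you import the Lebesgue density theorem for doubling metric measure spaces as a black box (valid, and available in the cited Heinonen text, but itself proved by a covering argument of Vitali/Besicovitch type). Two minor points you should make explicit: Definition \ref{doubling_def} states the doubling inequality for closed balls while you use open ones (a standard and harmless translation), and the density ratio is well-defined because a nontrivial doubling measure gives every ball finite positive measure. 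Neither is a gap in substance.
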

\begin{proof}
	Let $\mu$ be a doubling measure on $\mathbb{R}^{d+1}$ with doubling constant $C,$ define $M$ as in Fact \ref{plane_approx}, and let $\eps >0.$  Put 
	\begin{equation}\label{K_def}
	K = \lceil 5M+1 \rceil + 1 ~~\text{and}~~ p = \left\lceil \frac{\ln(5K)}{\ln(2)} \right\rceil.
	\end{equation}
	Because $\bigcap_{\delta>0}(E_{\delta}\setminus E)=\varnothing,$ there is $\delta>0$ such that $\mu(E_{\delta}\setminus E) < \eps/(C^p)$.  Choose a covering $\B$ of $E$ by open balls as in Lemma \ref{P_balls}.
		
	Consider the collection of open balls $\hat{\B}$ obtained by translating each $B = B(z,r) \in \B$ to one of its offset balls $\hat{B} = B(w, r) \subset (E_{\delta} \setminus E)$, where $|z-w| = (5M+1)r$ as in Lemma \ref{P_balls} and Remark \ref{5M_remark}.  By construction, each $z \in E$ is the center of some  $B \in \B$, so $z \in K\hat{B}$ by \eqref{K_def} and hence
	\begin{equation}\label{3B_shifted}
		E \subset \bigcup_{\hat{B} \in \hat{\B}} K\hat{B}.
	\end{equation}
	The Vitali covering lemma (see e.g. \cite[page 3]{Heinonen}) provides a countable disjoint subcollection $\hat{\C} \subset \hat{\B}$ such that
	\begin{equation}\label{15B_shifted}
		\bigcup_{\hat{B} \in \hat{\B}} K\hat{B} \subset \bigcup_{\hat{B} \in \hat{\C}} (5K) \hat{B}. 
	\end{equation}
	The fact that $\hat{B} \subset (E_{\delta} \setminus E)$ for every $\hat{B}$, combined with the doubling property of $\mu$, equations \eqref{K_def},\eqref{3B_shifted},\eqref{15B_shifted}, and the fact that $\hat{C}$ is disjoint, yield
	\begin{equation}
		\begin{split} \label{main_eqn}
			\mu(E) &\leq \mu \left( \bigcup_{\hat{B} \in \hat{\C}} (5K)\hat{B} \right) \\
			&\leq \sum_{\hat{B} \in \hat{\C}} \mu((5K) \hat{B}) \\
			&\leq \sum_{\hat{B} \in \hat{\C}} C^p \mu(\hat{B}) \\
			&= C^p \mu \left( \bigcup_{\hat{B} \in \hat{\C}} \hat{B} \right) \\
			&\leq C^p \mu(E_{\delta} \setminus E) \\
			&< C^p (\eps/(C^p)) \\
			&= \eps. \qedhere
		\end{split}
	\end{equation}
\end{proof}

The relationship between the graph $E$ and the collection of offset open balls $\hat{B} \subset (E_{\delta} \setminus E)$ used in the proof of Theorem \ref{main_result} raises the question of porosity.  The following definition can be found in \cite{Chen}.
\begin{definition}\label{porous_def}
	A subset $E \subset \R^d$ is called porous if there exists $a \in (0,1)$ such that for any ball $B(x, r)$, there is a ball $B(y, a r) \subset B(x, r)$ satisfies 
	\[B(y, a r) \cap E = \varnothing.\]
\end{definition}
\begin{remark}
	The value $r>0$ employed in the proof of Lemma \ref{P_balls} is independent of the chosen reference point $z_0 \in E$.  However, the smaller radius $0<r'<r$, which produces the ball $B(y, r') \subset (B(z_0, r) \setminus E),$ depends on $z_0$.  Thus the question of porosity remains open.
\end{remark}

\begin{question}
	Given any Zygmund function $f:\R^d \rightarrow \R$, is the graph porous in $\R^{d+1}$?
\end{question}

\begin{remark}
	Because $\R^n$ is uniformly perfect, it follows from Theorem \ref{main_result} above and Proposition 1.2 in \cite{Ojala2} that the graph $E$ of any Zygmund function is quasisymmetrically null.  On the other hand, Theorem 1.1 in \cite{Ojala} guarantees that $E \subset X$ is thin if there are closed balls 
	\[\{\diam \bar{B}_i\}_{i = 1}^{\infty} \in \ell^0 := \bigcap_{p>0} \ell^p\] such that $E = X \setminus \bigcup \bar{B}_i$.  This leads to the following question of the converse.
\end{remark}
\begin{question}
	Suppose $f \in \Lambda_*([a,b]^d)$, denote its graph $E \subset \R^{d+1}$.  Are there $M>0$ and a sequence of closed balls $\bar{B}_i \subset \overline{E_M}$ such that $\{\diam \bar{B}_i\}_1^{\infty} \in \ell^0$ and $E = \overline{E_M} \setminus \bigcup \bar{B}_i$?
\end{question}

Considering the thinness of Zygmund graphs in euclidean space, hope remains for an affirmative answer to the following question posed in \cite{Ojala}.
\begin{question}
	Is the graph of every H\"older function $f \in \Lambda_{\alpha}(\R^d)$ thin for doubling measures?
\end{question}

\begin{acknowledgement*}
	The author thanks Leonid Kovalev for suggesting references \cite{Jonsson}, \cite{Jonsson-Wallin} and \cite{Wu}.
\end{acknowledgement*}

\begin{bibdiv}
\begin{biblist}
\bib{Chen}{article}{
	author={Chen, Changhao},
	author={Wen, Shengyou},
	title={On thin carpets for doubling measures},
	journal={Proc. Amer. Math. Soc.},
	volume={147},
	date={2019},
	number={8},
	pages={3439--3449},
	issn={0002-9939},
	review={\MR{3981122}},
	doi={10.1090/proc/14493},
}

\bib{Donaire}{article}{
	author={Donaire, Juan Jes\'{u}s},
	author={Llorente, Jos\'{e} G.},
	author={Nicolau, Artur},
	title={Differentiability of functions in the Zygmund class},
	journal={Proc. Lond. Math. Soc. (3)},
	volume={108},
	date={2014},
	number={1},
	pages={133--158},
	issn={0024-6115},
	review={\MR{3162823}},
	doi={10.1112/plms/pdt016},
}

\bib{Garnett}{article}{
	author={Garnett, John},
	author={Killip, Rowan},
	author={Schul, Raanan},
	title={A doubling measure on $\Bbb R^d$ can charge a rectifiable curve},
	journal={Proc. Amer. Math. Soc.},
	volume={138},
	date={2010},
	number={5},
	pages={1673--1679},
	issn={0002-9939},
	review={\MR{2587452}},
	doi={10.1090/S0002-9939-10-10234-2},
}

\bib{Hardy}{article}{
	author={Hardy, G. H.},
	title={Weierstrass's non-differentiable function},
	journal={Trans. Amer. Math. Soc.},
	volume={17},
	date={1916},
	number={3},
	pages={301--325},
	issn={0002-9947},
	review={\MR{1501044}},
	doi={10.2307/1989005},
}

\bib{Heinonen}{book}{
	author={Heinonen, Juha},
	title={Lectures on analysis on metric spaces},
	series={Universitext},
	publisher={Springer-Verlag, New York},
	date={2001},
	pages={x+140},
	isbn={0-387-95104-0},
	review={\MR{1800917}},
	doi={10.1007/978-1-4613-0131-8},
}

\bib{Jonsson}{article}{
	author={Jonsson, Alf},
	author={Sj\"{o}gren, Peter},
	author={Wallin, Hans},
	title={Hardy and Lipschitz spaces on subsets of ${\bf R}^n$},
	journal={Studia Math.},
	volume={80},
	date={1984},
	number={2},
	pages={141--166},
	issn={0039-3223},
	review={\MR{781332}},
	doi={10.4064/sm-80-2-141-166},
}

\bib{Jonsson-Wallin}{article}{
	author={Jonsson, Alf},
	author={Wallin, Hans},
	title={Function spaces on subsets of ${\bf R}^n$},
	journal={Math. Rep.},
	volume={2},
	date={1984},
	number={1},
	pages={xiv+221},
	issn={0275-7214},
	review={\MR{820626}},
}

\bib{Luukkainen}{article}{
	author={Luukkainen, Jouni},
	author={Saksman, Eero},
	title={Every complete doubling metric space carries a doubling measure},
	journal={Proc. Amer. Math. Soc.},
	volume={126},
	date={1998},
	number={2},
	pages={531--534},
	issn={0002-9939},
	review={\MR{1443161}},
	doi={10.1090/S0002-9939-98-04201-4},
}

\bib{Ojala}{article}{
	author={Ojala, Tuomo},
	author={Rajala, Tapio},
	title={A function whose graph has positive doubling measure},
	journal={Proc. Amer. Math. Soc.},
	volume={144},
	date={2016},
	number={2},
	pages={733--738},
	issn={0002-9939},
	review={\MR{3430849}},
	doi={10.1090/proc12748},
}

\bib{Ojala2}{article}{
	author={Ojala, Tuomo},
	author={Rajala, Tapio},
	author={Suomala, Ville},
	title={Thin and fat sets for doubling measures in metric spaces},
	journal={Studia Math.},
	volume={208},
	date={2012},
	number={3},
	pages={195--211},
	issn={0039-3223},
	review={\MR{2911493}},
	doi={10.4064/sm208-3-1},
}

\bib{Wu}{article}{
	author={Wu, Jang-Mei},
	title={Null sets for doubling and dyadic doubling measures},
	journal={Ann. Acad. Sci. Fenn. Ser. A I Math.},
	volume={18},
	date={1993},
	number={1},
	pages={77--91},
	issn={0066-1953},
	review={\MR{1207896}},
}

\bib{Zygmund}{article}{
	author={Zygmund, A.},
	title={Smooth functions},
	journal={Duke Math. J.},
	volume={12},
	date={1945},
	pages={47--76},
	issn={0012-7094},
	review={\MR{12691}},
}

\end{biblist}
\end{bibdiv}

\end{document}